\newcommand{\Field}[1]{#1}			
\newcommand{\SL}[2]{\mathrm{SL}_{#1}(#2)}    
\newcommand{\GL}[2]{\mathrm{GL}_{#1}(#2)}		
\newcommand{\Q}{\mathbb{Q}}            
\newcommand{\Z}{\mathbb{Z}}            
\newcommand{\conj}[2]{{#1}^{#2}}  		
\newcommand{\pIdeal}[1]{\mathfrak{p}_{#1}}
\newcommand{\pElem}{\varpi}
\newcommand{\oRing}[1]{\mathfrak{o}_{#1}}
\newcommand{\val}{\mathrm{v}} 		
\newcommand{\ramInd}[1]{\mathrm{e}_{#1}}	 
\newcommand{\grp}[1]{\mathrm{#1}}		
\newcommand{\G}{\mathrm{G}}							
\newcommand{\Mat}[1]{\mathtt{#1}}		
\newcommand{\trace}[1]{\mathtt{tr}\left(#1\right)}
\newcommand{\vertex}[3]{\begin{bmatrix}#1\\#2\end{bmatrix}_{\displaystyle#3}}		
\newcommand{\BLPair}[2]{\left\langle#1,#2\right\rangle}
\newcommand{\quotChar}[1]{\chi_{_{\displaystyle #1}}} 
\newcommand{\nin}{\not{\!\!\in\hspace{4pt}}}
\theoremstyle{plain}
\newtheorem{theorem}{Theorem}
\newtheorem*{theorem*}{Theorem}
\newtheorem{lemma}{Lemma}
\newtheorem*{lemma*}{Lemma}
\theoremstyle{definition}
\newtheorem*{definition*}{Definition}
\newtheorem*{notation*}{Notation}
\theoremstyle{comment}
\newtheoremstyle{named}{}{}{\itshape}{}{\bfseries}{.}{.5em}{#1 #3}
\theoremstyle{named}
\newtheorem*{namedtheorem}{Theorem}
\title{Moy-Prasad maps for $\mathrm{SL}(2)$ over extensions of $\Q_2$}
\author{Terence Joseph K\i vran-Swaine\\
The New York City College of Technology\\
The City University of New York\\
New York, NY 11201\\
\texttt{tkivran-swaine@citytech.cuny.edu}}
\date{9/27/2011}
\begin{document}
\maketitle
\begin{abstract}A parametrization for characters of abelian quotients of compact subgroups of $\grp{G}:=\SL{2}{\Field{F}}$ is constructed for $\Field{F}$ an algebraic extension of $\Q_2$ that corresponds to the Moy-Prasad maps of fields of odd residual characteristic.\end{abstract}
In his Ph.D. thesis, \cite{Sh1966}, Joseph Shalika established that the representations of Weil (\cite{W1964}) form an exhaustive list of irreducible, cuspidal representations over $\grp{G}:=\SL{2}{\Field{F}}$ for $\Field{F}$ a local field of odd residual characteristic.  To that end, Shalika employs a now classical filtration of the compact subgroups of $\grp{G}$ and then lists the possible characters on compact subgroups of $\grp{G}$ contained in such representations to count the representations of $\grp{G}$ of a particular ``level''.\\
In his 1972 paper, \cite{C1972}, Casselman extended this technique to construct the irreducible cuspidal representations which occur in the construction of Weil for $\SL{2}{\Field{F}}$ with $\Field{F}$ of even residual characteristic.  In 1976, Nobs establish that exactly four ``exceptional" representations for $\SL{2}{\Q_2}$ existed outside the construction of Weil.  Later that year, Nobs along with Wolfart identified these representations in \cite{NW1976} by using tensor products of other representations.\\
In his 1978 papers, \cite{K1978p1} and \cite{K1978p2}, Kutzko built on the parametrization of Shalika to explicitly construct cuspidal representations of $\GL{2}{\Field{F}}$ for arbitrary residual characteristic.  An alternate presentation of this approach can be found in Kutzko's 1972 Ph.D. Thesis, \cite{K1972}.  His parametrization became know as the theory of cuspidal types. \\
This technique was modified back to $\SL{2}{\Field{F}}$ of odd residual characteristic by Mandersheid in his 1984 papers, \cite{M1} and \cite{M2}.  In 1994 Moy and Prasad proved that a more general approach for classifying irreducible representations could be employed for arbitrary reductive groups over arbitrary $p$.  While their approach differed from that of Kutzko, they rely on the implicit existence of a parametrization of characters corresponding to that used in Shalika's thesis. (\cite{MP1994}).\\
The functions underlying the parametrization of characters employed by Moy and Prasad became known as Moy-Prasad maps.  They provide an $\grp{G}$-isomorphism to certain characters of compact subgroups of $\grp{G}$ from quotients of fractional ideals in the Lie Algebra corresponding to $\grp{G}$.  Resently they have been employed in work by Yu, \cite{Y2001} and in the exposition the harmonic analyis of $\SL{2}{\Field{F}}$ for $p\neq 2$ by Adler, Debacker, Sally and Spice, \cite{ADSS2011}.\\
As presented in the literature, these $\G$-isomorphisms factors through a $p$-adic Killing form on the Lie Algebra in question.  As the killing form on the trace zero matrices $\Z/2\Z$ is known to be degenerate, the traditional formulation of Moy-Prasad maps does not freely trasfer to $\SL{2}{\Field{F}}$ for $\Field{F}$ of residual characteristic two.  In this note I develop a counterpart for Moy-Prasad maps for the case where $p=2$.\\
This work is a generalization of a portion of my dissertation, \cite{K-S2011} and is the first in a sequence of papers which will develop the representation theory and harmonic analysis of $\SL{2}{\Field{F}}$ where $p=2$, employing the methods of Kutzko and Mandersheid.
\begin{notation*}
I will use $l$, $m$ and $n$ to denote integers.  Here is some notation I will employ:
\begin{itemize}
\item Let $\Field{F}$ be an algebraic extension of $\Q_2$ of ramification index $\ramInd{}$.
\item Let $\G=\SL{2}{\Field{F}}$.
\item Let $\oRing{}$ denote the ring of integers of $\Field{F}$.  
\item Let $\pIdeal{}$ be the prime ideal of $\oRing{}$.
\item Let $\pElem$ be a local uniformizing parameter that generates $\pIdeal{}$.
\item Denote the valuation of $x$ in $\Field{F}^\times$ by $\val(x)$.
\item Let $\grp{K}$ denote a maximal compact subgroup of $\grp{G}$, namely $\SL{2}{\oRing{}}$.
\item For $n,m\geq 0$, let $\grp{K}_n^m$ denote the compact subgroup of $\grp{G}$ which consists of elements of the form,
\begin{equation}
1_2+\begin{pmatrix}\pIdeal{}^n&\pIdeal{}^{n+m}\\
\pIdeal{}^{n+m}&\pIdeal{}^n
\end{pmatrix}.
\end{equation}
\end{itemize} 

\end{notation*}
\noindent In this paper I offer the following results.
\begin{theorem}\label{thm:Nrml}
If $\ramInd{},n\geq m$, then $\grp{K}_{n}^{m}$
is a normal subgroup of $\grp{K}$.
\end{theorem}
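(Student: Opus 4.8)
The plan is to establish the two halves of the assertion separately: that $\grp{K}_{n}^{m}$ is a subgroup of $\grp{K}$, and that it is stable under conjugation by all of $\grp{K}$. Write a typical element as $g=1_2+M$ with $M=\begin{pmatrix}a&b\\c&d\end{pmatrix}$, where $a,d\in\pIdeal{}^{n}$ and $b,c\in\pIdeal{}^{n+m}$, and let $\mathfrak{g}$ denote the additive lattice of all such $M$. Since $(1_2+M)(1_2+M')=1_2+(M+M'+MM')$ and a one-line estimate gives $MM'\in\begin{pmatrix}\pIdeal{}^{2n}&\pIdeal{}^{2n+m}\\ \pIdeal{}^{2n+m}&\pIdeal{}^{2n}\end{pmatrix}\subseteq\mathfrak{g}$, while the determinant-one condition yields $(1_2+M)^{-1}=1_2+\begin{pmatrix}d&-b\\-c&a\end{pmatrix}$, the subgroup property is immediate and uses only $n,m\geq 0$. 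The content of the theorem therefore lies entirely in normality, and I expect the hypotheses $\ramInd{},n\geq m$ to be consumed only there.

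For normality I would reduce to a computation with generators. Because $\oRing{}$ is a local ring, $\grp{K}=\SL{2}{\oRing{}}$ is generated by the elementary matrices $\begin{pmatrix}1&x\\0&1\end{pmatrix}$ and $\begin{pmatrix}1&0\\y&1\end{pmatrix}$ with $x,y\in\oRing{}$. These generators are closed under inversion and the set of elements normalizing $\grp{K}_{n}^{m}$ is a subgroup, so it suffices to verify $k(1_2+M)k^{-1}=1_2+kMk^{-1}\in\grp{K}_{n}^{m}$ for a single elementary generator $k$; throughout the reduction the intermediate matrices stay of determinant one. The key algebraic input is that this determinant condition sharpens the diagonal estimate on $M$: from $\det{1_2+M}=1$ together with $\det{1_2+M}=1+\trace{M}+\det{M}$ one gets $\trace{M}=bc-ad$, and since $ad\in\pIdeal{}^{2n}$ and $bc\in\pIdeal{}^{2(n+m)}$ this forces $a+d=\trace{M}\in\pIdeal{}^{2n}$, a full factor of $\pIdeal{}^{n}$ better than the naive bound.

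Now I would compute. Conjugating $M$ by $\begin{pmatrix}1&x\\0&1\end{pmatrix}$ leaves the $(2,1)$-entry equal to $c$, sends the diagonal to $a+xc$ and $d-xc$ (both still in $\pIdeal{}^{n}$ since $xc\in\pIdeal{}^{n+m}$), and sends the $(1,2)$-entry to $b+x(d-a)-x^{2}c$. The terms $b$ and $x^{2}c$ already lie in $\pIdeal{}^{n+m}$, so the only thing needing control is $x(d-a)$, which must be shown to lie in $\pIdeal{}^{n+m}$ as well. I would write $d-a=(a+d)-2a$, so that $x(d-a)=x(a+d)-2ax$: the first piece lies in $\oRing{}\cdot\pIdeal{}^{2n}\subseteq\pIdeal{}^{2n}$, which sits inside $\pIdeal{}^{n+m}$ exactly when $n\geq m$; the second lies in $\pIdeal{}^{\ramInd{}+n}$ because $\val(2)=\ramInd{}$, and this sits inside $\pIdeal{}^{n+m}$ exactly when $\ramInd{}\geq m$. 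Conjugation by $\begin{pmatrix}1&0\\y&1\end{pmatrix}$ is identical after interchanging the two off-diagonal roles, the problematic term there being $y(a-d)$. Thus both hypotheses are used precisely, and each elementary generator normalizes $\grp{K}_{n}^{m}$.

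The main obstacle is exactly the $(1,2)$-entry term $x(d-a)$: a priori it lies only in $\pIdeal{}^{n}$, which is too coarse, and it is rescued only by combining the determinant-forced bound $a+d\in\pIdeal{}^{2n}$ with the fact that $2$ has valuation $\ramInd{}$ rather than $0$. This is precisely where residual characteristic two helps rather than hinders: over a field of odd residual characteristic $2$ is a unit, so $2ax$ ranges over all of $\pIdeal{}^{n}$ and one cannot expect normality in the full maximal compact for $m>0$, whereas here $\val(2)=\ramInd{}$ supplies the missing factor as soon as $\ramInd{}\geq m$.
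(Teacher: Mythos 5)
Your proof is correct, and it takes a genuinely different route from the paper's. The paper argues structurally: it introduces the groups $\grp{K}_{n}$ and $\grp{B}_{n+m}$ (the preimage of the upper-triangular matrices mod $\pIdeal{}^{n+m}$), lets $\grp{K}$ act transitively on the projective line over $\oRing{}/\pIdeal{}^{n+m}$, identifies $\grp{B}_{n+m}$ as a point stabilizer, and then shows that $\grp{K}_{n}^{m}$ acts trivially on that projective line, whence $\grp{K}_{n}^{m}$ is the intersection of \emph{all} $\grp{K}$-conjugates of $\grp{B}_{n+m}\cap\grp{K}_{n}$; normality is immediate since an intersection of all conjugates of a subgroup is normal. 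You instead reduce to conjugation by the elementary generators of $\SL{2}{\oRing{}}$ (a legitimate reduction, since $\oRing{}$ is local and the normalizer is a subgroup) and control the single dangerous term $x(d-a)$ via the determinant identity $a+d=bc-ad\in\pIdeal{}^{2n}$ together with $\val(2)=\ramInd{}$. It is worth noting that the arithmetic heart of the two arguments is identical: the paper's key step expands $(1+\pElem^{n}a)^{2}=1+2\pElem^{n}a+\pElem^{2n}a^{2}$ and uses exactly your two estimates, $2\pElem^{n}a\in\pIdeal{}^{n+\ramInd{}}\subseteq\pIdeal{}^{n+m}$ (needing $\ramInd{}\geq m$) and $\pElem^{2n}a^{2}\in\pIdeal{}^{2n}\subseteq\pIdeal{}^{n+m}$ (needing $n\geq m$). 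What the paper's route buys is a geometric characterization of $\grp{K}_{n}^{m}$ as the part of $\grp{K}_{n}$ acting trivially on the sphere of radius $n+m$ about the vertex fixed by $\grp{K}$ in the Bruhat-Tits tree, a description with independent value for the sequel; what your route buys is elementarity and complete self-containment (you even verify the subgroup property, which the paper's notation takes for granted), along with a precise accounting of where each hypothesis is consumed. Your closing remark is also accurate: for odd residual characteristic $2$ is a unit, the term $2ax$ has valuation exactly $n$ for unit $a,x$, and normality of $\grp{K}_{n}^{m}$ in $\grp{K}$ genuinely fails for $m>0$, which is why this statement is special to residual characteristic two.
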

\begin{theorem}\label{thm:dual}
If $m\leq\ramInd{}$, then the set of characters of the quotient
$\grp{K}_{n}^{m}/\grp{K}_{2n}^{m}$
as a $\grp{G}$-set is parametrized by matrices of the form
\begin{equation*}
\begin{pmatrix}\pIdeal{}^{-2n-\ramInd{}}/\pIdeal{}^{-n-\ramInd{}}
&\pIdeal{}^{-2n-m}/\pIdeal{}^{-n-m}\\
\pIdeal{}^{-2n-m}/\pIdeal{}^{-n-m}&\pIdeal{}^{-2n-\ramInd{}}/\pIdeal{}^{-n-\ramInd{}}
\end{pmatrix}.
\end{equation*}
\end{theorem}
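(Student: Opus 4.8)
The plan is to linearize the quotient, identify its Pontryagin dual through the trace form, and exhibit the residual-characteristic-two degeneracy as a shift of the diagonal by $\val(2)=\ramInd{}$. First I would check that $1_{2}+M\mapsto M$ induces a group isomorphism from $\grp{K}_{n}^{m}/\grp{K}_{2n}^{m}$ onto the additive quotient of
\[
\mathfrak{m}:=\left\{M=\begin{pmatrix}\pIdeal{}^{n}&\pIdeal{}^{n+m}\\\pIdeal{}^{n+m}&\pIdeal{}^{n}\end{pmatrix}:\trace{M}\in\pIdeal{}^{2n}\right\}
\]
by the lattice $\begin{pmatrix}\pIdeal{}^{2n}&\pIdeal{}^{2n+m}\\\pIdeal{}^{2n+m}&\pIdeal{}^{2n}\end{pmatrix}$. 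The relation $\det{1_{2}+M}=1$ forces $\trace{M}=-\det{M}\in\pIdeal{}^{2n}$; the product $(1_{2}+M)(1_{2}+M')=1_{2}+(M+M'+MM')$ has cross term $MM'$ inside that lattice, and the same estimate applied to $MM'-M'M$ shows the quotient is abelian. Consequently the class of $M$ is pinned down by $M_{11}\bmod\pIdeal{}^{2n}$ and $M_{12},M_{21}\bmod\pIdeal{}^{2n+m}$ with $M_{22}\equiv-M_{11}$, so the quotient is $(\pIdeal{}^{n}/\pIdeal{}^{2n})\oplus(\pIdeal{}^{n+m}/\pIdeal{}^{2n+m})^{2}$, of order $q^{3n}$ with $q:=|\oRing{}/\pIdeal{}|$.

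Next I would fix an additive character $\psi$ of $\Field{F}$ of conductor $\oRing{}$ and attach to each trace-zero $A=\begin{pmatrix}a&b\\c&-a\end{pmatrix}$ the character $\chi_{A}(1_{2}+M):=\psi(\trace{AM})$; linearity of $\trace{}$ makes this a character of the additive quotient. The decisive computation is $\trace{AM}=2aM_{11}+bM_{21}+cM_{12}$ on trace-zero $M$. The off-diagonal pairings $\psi(bM_{21}),\psi(cM_{12})$ are perfect between $\pIdeal{}^{n+m}/\pIdeal{}^{2n+m}$ and $\pIdeal{}^{-2n-m}/\pIdeal{}^{-n-m}$, yielding the off-diagonal ideals of the statement. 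In the diagonal pairing $\psi(2aM_{11})$ the factor $2$ is what matters: since $\val(2)=\ramInd{}$ rather than $0$, the dual of $\pIdeal{}^{n}/\pIdeal{}^{2n}$ is shifted from $\pIdeal{}^{-2n}/\pIdeal{}^{-n}$ to $\pIdeal{}^{-2n-\ramInd{}}/\pIdeal{}^{-n-\ramInd{}}$. This is precisely the degeneracy of the trace form on the diagonal torus that obstructs the classical Moy-Prasad map, here made quantitative. The hypothesis $m\le\ramInd{}$ is exactly what is needed: it reads $\pIdeal{}^{\ramInd{}}\subseteq\pIdeal{}^{m}$, equivalently $\pIdeal{}^{-2n-m}\subseteq\pIdeal{}^{-2n-\ramInd{}}$, so that the shifted diagonal ideal still contains the off-diagonal one and the displayed matrix is a coherent lattice on which the pairing remains nondegenerate.

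Since scalar matrices pair trivially with trace-zero $M$, the matrices $A$ of the stated shape taken modulo scalars (equivalently in trace-zero form) also number $q^{3n}$; a nondegenerate pairing between finite abelian groups of equal order is perfect, so $A\mapsto\chi_{A}$ is a bijection onto the full character group of $\grp{K}_{n}^{m}/\grp{K}_{2n}^{m}$. Finally, invariance of the trace form under conjugation, $\trace{(gAg^{-1})(gMg^{-1})}=\trace{AM}$, gives $g\cdot\chi_{A}=\chi_{gAg^{-1}}$, so the bijection is one of $\G$-sets.

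I expect the crux to be the diagonal analysis in the second step: disentangling the diagonal and off-diagonal contributions of $\trace{AM}$ and bookkeeping the factor $2$ so that the dual lattice acquires the asymmetric exponents $-2n-\ramInd{}$ on the diagonal against $-2n-m$ off the diagonal. Securing genuine nondegeneracy there — not merely an injective map of characters — is what pins down the role of the hypothesis $m\le\ramInd{}$.
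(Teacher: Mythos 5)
Your proposal is correct and takes essentially the same route as the paper: you linearize via $X\mapsto X-1$ (the paper's Lemma \ref{lem:Xminus1}), pair trace-zero matrices through the trace form composed with a fixed additive character, locate the $p=2$ shift in the diagonal factor $2$ exactly as the paper does through its unit $u=\pElem^{\ramInd{}}/2$ in Lemma \ref{lem:pair}, get $\G$-equivariance from conjugation-invariance of the trace (Lemma \ref{lem:Gmap}), and conclude bijectivity from injectivity plus an equal-order count, matching the paper's final cardinality comparison via Pontryagin duality. The only differences are cosmetic: your explicit trace-zero normal form and the count $q^{3n}$, versus the paper's slightly more general Theorem $2'$ with independent off-diagonal exponents $l,m$.
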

\subsection*{Proof of Theorem \ref{thm:Nrml}}
\begin{notation*}
In both this proof and the next I will denote $u=\dfrac{\pElem^{\ramInd{}}}2$.  Note that $u\in\oRing{}^\times$. \\
I recall two famous subgroups of $\grp{K}$.
\begin{itemize}
\item For $n\geq 1$, let $\grp{K}_{n}$ be the kernel in $\grp{K}$ of the map induced by the natural homomorphism $\oRing{}\rightarrow\oRing{}/\pIdeal{}^n$ and let $\grp{K}_0=\grp{K}$.
\item For $n\geq 1$, let $\grp{B}_n$ be the preimage in $\grp{K}$ of the group of upper triangular matrices of the map induced by the natural map $\oRing{}\rightarrow\oRing{}/\pIdeal{}^n$  and let $\grp{B}_0=\grp{K}$.
\end{itemize}
For $n\geq 0 $, I will also denote by $\vertex{x}{y}{n}$, the homothety class of $(x,y)\in\oRing{}\times\oRing{}$ in the projective line over $\oRing{}/\pIdeal{}^{n}$.  For my purposes, the projective line over $\oRing{}/\pIdeal{}^{0}$ is a singleton set. \\
\end{notation*}
Note that for the calculations that follow, one may consider $\left[\begin{smallmatrix}x\\y\end{smallmatrix}\right]_{n}$
 as a vertex on the Bruhat-Tits tree that is distance $n$ from the vertex stabilized by $\grp{K}$ as in \cite{S1977}.  While this perspective can be  insightful with respect to Theorem \ref{thm:Nrml}, it is not necessary for purposes of proof. \\
I consider the transitive action of $\grp{K}$ on the projective line over $\oRing{}/\pIdeal{}^{n}$: 
\begin{equation*}
\begin{pmatrix}
a&b\\c&d
\end{pmatrix}\vertex{x}{y}{n}=\vertex{ax+by}{cx+dy}{n}
\end{equation*}
It is well known and easily verified that the stabilizer of the projective point $\left[\begin{smallmatrix}1\\0\end{smallmatrix}\right]_{n}$ is none other than $\grp{B}_{n}$.  Consequently, for $\gamma\in\grp{K}$ the conjugate $\conj{\grp{B}_n}\gamma$ is the stabilizer of $\gamma^{-1} \left[\begin{smallmatrix}1\\0\end{smallmatrix}\right]_{n}$.\\
I now consider the group of matrices: 
\begin{equation*}
\grp{B}_{n+m}\cap\grp{K}_{n}=1_2+\begin{pmatrix}\pIdeal{}^n&\pIdeal{}^{n}\\
\pIdeal{}^{n+m}&\pIdeal{}^n
\end{pmatrix}
\end{equation*}
\begin{lemma}
If $n,\ramInd{}\geq m$, the group $\grp{K}_{n}^{m}$
is the intersection of each of the conjugates of $\grp{B}_{n+m}\cap\grp{K}_{n}$ under $\grp{K}$.
\end{lemma}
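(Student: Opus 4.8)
The plan is to recognize the right-hand side as the normal core of $H := \grp{B}_{n+m}\cap\grp{K}_n$ in $\grp{K}$ and to compute it through the action on the projective line. First I would exploit that $\grp{K}_n$ is normal in $\grp{K}$, being the kernel of reduction modulo $\pIdeal{}^n$, so that for every $\gamma\in\grp{K}$
\[
\conj{H}{\gamma}=\conj{\grp{B}_{n+m}}{\gamma}\cap\conj{\grp{K}_n}{\gamma}=\conj{\grp{B}_{n+m}}{\gamma}\cap\grp{K}_n .
\]
By the stabilizer description recalled above, $\conj{\grp{B}_{n+m}}{\gamma}$ is the stabilizer in $\grp{K}$ of $\gamma^{-1}\vertex{1}{0}{n+m}$, so $\conj{H}{\gamma}$ is exactly the set of elements of $\grp{K}_n$ fixing that projective point.

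Next I would intersect over all $\gamma$. Since $\grp{K}$ acts transitively on $\Proj{}{\oRing{}/\pIdeal{}^{n+m}}$, the points $\gamma^{-1}\vertex{1}{0}{n+m}$ exhaust the whole projective line as $\gamma$ runs over $\grp{K}$, whence
\[
\bigcap_{\gamma\in\grp{K}}\conj{H}{\gamma}=\grp{K}_n\cap\grp{Z},
\]
where $\grp{Z}$ is the kernel of the action of $\grp{K}$ on $\Proj{}{\oRing{}/\pIdeal{}^{n+m}}$. A short standard computation (testing $\vertex{1}{0}{n+m}$, $\vertex{0}{1}{n+m}$, $\vertex{1}{1}{n+m}$) identifies $\grp{Z}$ with the matrices of $\grp{K}$ that are scalar modulo $\pIdeal{}^{n+m}$. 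It then remains to show $\grp{K}_n\cap\grp{Z}=\grp{K}_n^m$.

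One inclusion is immediate: an element of $\grp{K}_n$ that is scalar mod $\pIdeal{}^{n+m}$ has off-diagonal entries in $\pIdeal{}^{n+m}$ and diagonal entries in $1+\pIdeal{}^n$, which is precisely the shape defining $\grp{K}_n^m$. The reverse inclusion is the crux, and it is where the hypotheses $n,\ramInd{}\geq m$ are consumed. Writing a general element of $\grp{K}_n^m$ as $\smMat{c}{1+a}{b}{1+d}$ with $a,d\in\pIdeal{}^n$ and $b,c\in\pIdeal{}^{n+m}$, the off-diagonal entries already sit in $\pIdeal{}^{n+m}$, so the only point to verify is that the matrix is genuinely scalar mod $\pIdeal{}^{n+m}$, i.e.\ that the two diagonal entries agree modulo $\pIdeal{}^{n+m}$. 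Expanding $\det{} = 1$ yields $a+d=bc-ad\in\pIdeal{}^{2n}$, hence $d\equiv -a$ and therefore $a-d\equiv 2a$ modulo $\pIdeal{}^{2n}$. Because $\val(2)=\ramInd{}$ (equivalently $2=\pElem^{\ramInd{}}/u$ with $u\in\oRing{}^\times$), we have $\val(2a)\geq\ramInd{}+n$, and the two inequalities $\ramInd{}\geq m$ and $2n\geq n+m$ (the latter from $n\geq m$) together force $a-d\in\pIdeal{}^{n+m}$.

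The main obstacle is exactly this diagonal comparison: in residual characteristic two the determinant relation controls only the \emph{sum} $a+d$ of the diagonal perturbations, and converting this into the \emph{difference} estimate $a-d\in\pIdeal{}^{n+m}$ is what forces the hypothesis $\ramInd{}\geq m$ to appear, through the valuation of $2$. Everything preceding this reduction is formal — normality of $\grp{K}_n$, transitivity of the $\grp{K}$-action, and the identification of point-stabilizers — so I would dispatch those steps quickly and reserve the care for the scalar-modulo-$\pIdeal{}^{n+m}$ verification.
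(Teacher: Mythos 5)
Your proof is correct and takes essentially the same approach as the paper: both identify the $\grp{K}$-conjugates of $\grp{B}_{n+m}\cap\grp{K}_{n}$ with point stabilizers for the transitive action on the projective line over $\oRing{}/\pIdeal{}^{n+m}$, and both hinge on the same arithmetic, namely that the determinant-one relation puts the diagonal perturbation sum in $\pIdeal{}^{2n}$ while $\val(2)=\ramInd{}\geq m$ and $n\geq m$ then force triviality modulo $\pIdeal{}^{n+m}$. Your packaging through the kernel of the action (matrices scalar modulo $\pIdeal{}^{n+m}$) is a minor variant of the paper's direct computation that $\grp{K}_{n}^{m}$ fixes each point $\vertex{x}{1}{n+m}$.
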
 
\begin{proof}
First it is easy to observe that $\grp{K}_{n}^{m}$ is the intersection of $\grp{B}_{n+m}\cap\grp{K}_{n}$ with the stabilizer of $\left[\begin{smallmatrix}0\\1\end{smallmatrix}\right]_{n+m}$, since the latter is precisely the group of determinant-one matrices of the form
\begin{equation*}
\begin{pmatrix}
\oRing{}^\times&\pIdeal{}^{m+n}\\
\oRing{}&\oRing{}^\times
\end{pmatrix}.
\end{equation*}
To show that is a subset of the other conjugates of $\grp{B}_{n+m}\cap\grp{K}_{n}$ and hence the intersection of all conjugates of $\grp{B}_{n+m}\cap\grp{K}_{n}$, I compute the the action of $\grp{K}_{n}^{m}$ on an arbitrary point of the projective line, illustrating that it is trivial.
\begin{align*}
\begin{pmatrix}
1+\pElem^n a& \pElem^{n+m}b\\
 \pElem^{n+m}c & 1+\pElem^n d\\
\end{pmatrix}&\vertex{x}{1}{n+m}=
\begin{pmatrix}1+\pElem^n a& 0\\
0 & (1+\pElem^n a)^{-1}\\
\end{pmatrix}\vertex{x}{1}{n+m}\\
&= \vertex{(1+\pElem^n a)x}{(1+\pElem^n a)^{-1}1}{n+m}\\
&= \vertex{(1+\pElem^n a)^2x}{1}{n+m}\\
&= \vertex{(1+2\pElem^na+\pElem^{2n}a^2)x}{1}{n+m}\\
&= \vertex{(1+\pElem^{n+\ramInd{}}\dfrac{a}{u}+\pElem^{2n}a^2)x}{1}{n+m}\\
&= \vertex{x}{1}{n+m},
\end{align*}
because $\ramInd{}, n \geq m$. 
\end{proof}
Theorem \ref{thm:Nrml} is a direct consequence of the preceding lemma.
\subsection*{Proof of Theorem \ref{thm:dual}}
The parametrization of characters referenced implicitly in Theorem \ref{thm:dual} is defined as follows.
\begin{equation}
\quotChar{\Mat{A}}(X):=\chi\bigl(\trace{(X-1)\Mat{A}} \bigr).
\end{equation}
The proof of Theorem \ref{thm:dual} is a consequence of the following three lemmas.
\begin{lemma}\label{lem:pair}
The pairing $\BLPair{\Mat{A}}{\Mat{B}}=\trace{\Mat{A}\Mat{B}}$ of trace-zero matrices,
\begin{equation}
\begin{pmatrix}
\pIdeal{}^{n}&\pIdeal{}^{n+m}\\
\pIdeal{}^{n+l}&\pIdeal{}^{n}\\
\end{pmatrix}
\times\begin{pmatrix}
\pIdeal{}^{-n-\ramInd{}}&\pIdeal{}^{-n-l}\\
\pIdeal{}^{-n-m}&\pIdeal{}^{-n-\ramInd{}}\\
\end{pmatrix}
\rightarrow
\oRing{},
\end{equation}
is bilinear and non-degenerate in the sense that if $\BLPair{\Mat{B}}{\Mat{A}}\in\pIdeal{}$ for every trace-zero matrix, $\Mat{B}\in\left( \begin{smallmatrix}
\pIdeal{}^{n}&\pIdeal{}^{n+m}\\
\pIdeal{}^{n+l}&\pIdeal{}^{n}\\
\end{smallmatrix}\right)$, then 
\begin{equation*}
\Mat{A}\in\pIdeal{}\cdot\begin{pmatrix}
\pIdeal{}^{-n-\ramInd{}}&\pIdeal{}^{-n-l}\\
\pIdeal{}^{-n-m}&\pIdeal{}^{-n-\ramInd{}}\\
\end{pmatrix}
\end{equation*}
\end{lemma}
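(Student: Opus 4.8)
The bilinearity of $\BLPair{\cdot}{\cdot}$ is immediate, since $\trace{\cdot}$ is linear and matrix multiplication is bilinear; so the real content of the lemma is the non-degeneracy claim, together with the (implicit) assertion that the pairing actually lands in $\oRing{}$. The plan is to make the trace form completely explicit on trace-zero matrices and then probe a hypothetical $\Mat{A}$ against three coordinate matrices drawn from the first lattice.

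First I would record the pairing explicitly. Writing $\Mat{A}=\left(\begin{smallmatrix}\alpha&\beta\\\gamma&-\alpha\end{smallmatrix}\right)$ and $\Mat{B}=\left(\begin{smallmatrix}a&b\\c&-a\end{smallmatrix}\right)$, a direct multiplication gives $\trace{\Mat{A}\Mat{B}}=2\alpha a+\beta c+\gamma b$. Using the bounds $a\in\pIdeal{}^{n}$, $b\in\pIdeal{}^{n+m}$, $c\in\pIdeal{}^{n+l}$ on the first factor and $\alpha\in\pIdeal{}^{-n-\ramInd{}}$, $\beta\in\pIdeal{}^{-n-l}$, $\gamma\in\pIdeal{}^{-n-m}$ on the second, each of the three summands lies in $\oRing{}$, which establishes that the pairing has the stated target. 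The only delicate term is $2\alpha a$: here the factor $\val(2)=\ramInd{}$ is exactly what absorbs the extra $-\ramInd{}$ carried on the diagonal of the second lattice, and this is precisely where residual characteristic two enters.

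For non-degeneracy I would assume $\BLPair{\Mat{B}}{\Mat{A}}\in\pIdeal{}$ for every trace-zero $\Mat{B}$ in the first lattice and extract the three valuation conditions one coordinate at a time, testing against generators. Taking $\Mat{B}=\left(\begin{smallmatrix}\pElem^{n}&0\\0&-\pElem^{n}\end{smallmatrix}\right)$ gives $2\pElem^{n}\alpha\in\pIdeal{}$; rewriting $2=\pElem^{\ramInd{}}u^{-1}$ with $u\in\oRing{}^{\times}$ turns this into $\val(\alpha)\geq -n-\ramInd{}+1$. Taking $\Mat{B}=\left(\begin{smallmatrix}0&\pElem^{n+m}\\0&0\end{smallmatrix}\right)$ gives $\pElem^{n+m}\gamma\in\pIdeal{}$, hence $\val(\gamma)\geq -n-m+1$, and the lower-triangular generator $\left(\begin{smallmatrix}0&0\\\pElem^{n+l}&0\end{smallmatrix}\right)$ forces $\val(\beta)\geq -n-l+1$. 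These three inequalities say exactly that $\Mat{A}$ lies in $\pIdeal{}$ times the second lattice, which is the desired conclusion.

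I do not expect a genuine computational obstacle; the difficulty is entirely conceptual and concentrated in the diagonal term of the trace form. In odd residual characteristic $2$ is a unit and the naive dual of $\pIdeal{}^{n}$ on the diagonal is simply $\pIdeal{}^{-n}$, but when $p=2$ one must account for $\val(2)=\ramInd{}$. Getting the bookkeeping right on this single term is the whole reason the second lattice is defined with the exponent $-n-\ramInd{}$ on its diagonal, and confirming that this choice makes the reduced pairing perfect is the heart of the argument.
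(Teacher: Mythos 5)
Your proposal is correct and takes essentially the same approach as the paper: both make the trace form explicit on trace-zero matrices (your $2\alpha a+\beta c+\gamma b$ is the paper's $\frac{1}{u}a_1b_1+a_2b_2+a_3b_3$ after factoring out powers of $\pElem$ and writing $2=\pElem^{\ramInd{}}u^{-1}$) and then obtain non-degeneracy by pairing against the three coordinate generators of the first lattice. The only difference is presentational — you argue directly from the hypothesis while the paper phrases the same test-matrix computation as a contrapositive.
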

\begin{proof}
This follows from direct calculation.
\begin{equation*}
\begin{split}
\trace{\pElem^n\begin{pmatrix}
a_1&\pElem^m a_2\\
\pElem^l a_3& -a_1
\end{pmatrix}
\pElem^{-n}\begin{pmatrix}
\pElem^{-\ramInd{}}b_1&\pElem^{-l}b_3\\
\pElem^{-m}b_2&-\pElem^{-\ramInd{}}b_1
\end{pmatrix}}\\
\quad=\trace{\begin{pmatrix}
a_1&\pElem^m a_2\\
\pElem^l a_3& -a_1
\end{pmatrix}
\begin{pmatrix}
\frac1{2u}b_1&\pElem^{-l}b_3\\
\pElem^{-m}b_2&-\frac1{2u}b_1
\end{pmatrix}}\\
=\frac1{2u}a_1b_1+a_2b_2+a_3b_3+\frac1{2u}a_1b_1\\
=\frac1{u}a_1b_1+a_2b_2+a_3b_3
\end{split}
\end{equation*}
If $\val({b_i})=0$ setting $a_i=1$ and $a_j=0$ for $j\neq i$ ensures that $\BLPair{\Mat{B}}{\Mat{A}}\nin\pIdeal{}$.\\
This is a bilinear pairing since $\trace{x\Mat{B}\Mat{A}}+\trace{y\Mat{C}\Mat{A}}=\trace{(x\Mat{B}+y\Mat{C})\Mat{A}}$ and similarly in the right-hand arguement. \end{proof}
\begin{lemma}\footnote[1]{Note that the second and third lemmas hold even when $\Field{F}$ is an algebraic extension of $\Q_p$ with $p\neq 2$.} \label{lem:Gmap}
The map $\Mat{A}\mapsto\quotChar{\Mat{A}}$ is a group homomorphism and a map of $\grp{G}$-sets.  That is,
\begin{equation*}
\quotChar{\Mat{A}}^g(X):=\quotChar{\Mat{A}}(X^g)=\quotChar{\Mat{A}^g}(X).
\end{equation*}
\end{lemma}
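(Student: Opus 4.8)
The plan is to read off both assertions directly from the definition $\quotChar{\Mat{A}}(X)=\chi\bigl(\trace{(X-1)\Mat{A}}\bigr)$, using only two formal properties of the trace form: its additivity in each argument and its invariance under cyclic permutation (equivalently, under conjugation). Since $\chi$ is a fixed additive character of $\Field{F}$, the multiplicative structure on characters will be produced entirely by the additive structure of the trace pairing, so nothing beyond these two properties should be required.

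For the homomorphism claim I would first observe that $\trace{(X-1)(\Mat{A}+\Mat{B})}=\trace{(X-1)\Mat{A}}+\trace{(X-1)\Mat{B}}$, so that $\chi(a+b)=\chi(a)\chi(b)$ yields $\quotChar{\Mat{A}+\Mat{B}}=\quotChar{\Mat{A}}\cdot\quotChar{\Mat{B}}$ pointwise; this is exactly the statement that $\Mat{A}\mapsto\quotChar{\Mat{A}}$ is a homomorphism from the additive group of matrices to the group of characters. I would also record here that each $\quotChar{\Mat{A}}$ is genuinely a character of $\grp{K}_{n}^{m}/\grp{K}_{2n}^{m}$: expanding $XY-1=(X-1)+(Y-1)+(X-1)(Y-1)$, the cross term $(X-1)(Y-1)$ has entries in the ideals that define $\grp{K}_{2n}^{m}$, so $\trace{(X-1)(Y-1)\Mat{A}}$ lands in $\oRing{}$ and is annihilated by $\chi$. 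This is precisely why the quotient by $\grp{K}_{2n}^{m}$ is taken, and it dovetails with the lattice bookkeeping of Lemma \ref{lem:pair}.

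For the $\grp{G}$-set claim I would compute $\quotChar{\Mat{A}}(X^{g})=\chi\bigl(\trace{(X^{g}-1)\Mat{A}}\bigr)$. Because conjugation fixes the identity, $X^{g}-1$ is the conjugate of $X-1$, and the cyclic invariance of the trace transfers the conjugation off the first factor and onto $\Mat{A}$, giving $\trace{(X^{g}-1)\Mat{A}}=\trace{(X-1)\Mat{A}^{g}}$ and hence $\quotChar{\Mat{A}}(X^{g})=\quotChar{\Mat{A}^{g}}(X)$. The one point that must be pinned down carefully is the bookkeeping of the conjugation conventions: cyclicity converts a conjugation of $X$ by $g$ into a conjugation of $\Mat{A}$ by $g^{-1}$, so the action on the group elements $X$ and the action on the parametrizing matrices $\Mat{A}$ must be fixed to be mutually opposite in order for the equivariance to come out exactly as stated. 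Once that convention is set, the identity is immediate.

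I expect no serious obstacle here: the entire content is the linearity and cyclicity of the trace together with the character property of $\chi$, which is why (as the footnote indicates) the argument is insensitive to the residual characteristic. The only genuine care is the convention matching just described, together with the verification that the cross term in the homomorphism computation is killed by $\chi$; both are routine.
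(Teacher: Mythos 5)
Your proofs of the two claims actually contained in the lemma coincide with the paper's: the homomorphism property is read off from the additivity of $\trace{(X-1)\Mat{A}}$ in $\Mat{A}$ together with $\chi(a+b)=\chi(a)\chi(b)$, and the equivariance from conjugation-invariance of the trace applied to $X^g-1=(X-1)^g$. Your convention remark is also a fair catch: with a single convention $Z^g=g^{-1}Zg$ on both sides, cyclicity gives $\trace{(X-1)^g\Mat{A}}=\trace{(X-1)\Mat{A}^{g^{-1}}}$, so the displayed identity $\quotChar{\Mat{A}}(X^g)=\quotChar{\Mat{A}^g}(X)$ implicitly pairs mutually opposite actions on $X$ and on $\Mat{A}$; the paper glosses this, and you are right to pin it down.

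However, the extra verification you volunteer --- that each $\quotChar{\Mat{A}}$ is multiplicative in $X$ because the cross term $(X-1)(Y-1)$ pairs integrally with $\Mat{A}$ and is killed by $\chi$ --- is false in this paper's setting, and the failure is exactly the $p=2$ phenomenon the paper is organized around. The parametrizing matrices $\Mat{A}$ have diagonal entries of valuation as low as $-2n-\ramInd{}$ with $\ramInd{}=\val(2)\geq 1$; the integrality of the diagonal contribution in Lemma \ref{lem:pair} depends on \emph{both} arguments being trace-zero, since the diagonal then enters as $2a_1b_1$ and the factor $2$ raises the valuation by $\ramInd{}$. The cross term $(X-1)(Y-1)$ is not trace-zero: its diagonal is a generic element of $\pIdeal{}^{2n}$, so $\trace{(X-1)(Y-1)\Mat{A}}$ lies only in $\pIdeal{}^{-\ramInd{}}$, not in $\oRing{}$. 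Concretely, over $\Q_2$ with $n=1$, $m=0$, take $X=\left(\begin{smallmatrix}1&2\\0&1\end{smallmatrix}\right)$, $Y=\left(\begin{smallmatrix}1&0\\2&1\end{smallmatrix}\right)$, $\Mat{A}=2^{-3}\left(\begin{smallmatrix}1&0\\0&-1\end{smallmatrix}\right)$; then $\trace{(X-1)\Mat{A}}=\trace{(Y-1)\Mat{A}}=0$ while $\trace{(X-1)(Y-1)\Mat{A}}=\tfrac12$, which no additive character trivial on $\oRing{}$ (or on $\pIdeal{}$) annihilates. This is why the paper does \emph{not} establish multiplicativity in $X$ inside this lemma at all, but defers it to the proof of Theorem $2^\prime$, routing it through the trace-zero quotient structure of Lemma \ref{lem:Xminus1} rather than a cross-term estimate; your argument is the standard odd-residual-characteristic one, which does not transfer. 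Since the lemma as stated asserts only additivity in $\Mat{A}$ and $\grp{G}$-equivariance, this does not invalidate your proof of the statement itself, but the volunteered paragraph should be deleted or substantially repaired.
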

\begin{proof}
First note that by Lemma \ref{lem:pair}, 
\begin{align*}
\quotChar{\Mat{A}}(X)&\cdot\quotChar{\Mat{B}}(X)=
\chi\left(\BLPair{X}{\Mat{A}}\right)\chi\left(\BLPair{X}{\Mat{B}}\right)\\
&=\chi\left(\BLPair{X}{\Mat{A}}+\BLPair{X}{\Mat{B}}\right)\\
&=\chi\left(\BLPair{X}{\Mat{A}+\Mat{B}}\right)=\quotChar{\Mat{A}+\Mat{B}}(X).
\end{align*}
Employing the fact that the trace of a matrix is invariant under conjugation, I calculate that
\begin{align*}
\quotChar{\Mat{A}}(X^g)&=\chi\left(\BLPair{X^g-1}{\Mat{A}}\right)\\
&=\chi\left(\BLPair{(X-1)^g}{\Mat{A}}\right)\\
&=\chi\left(\trace{(X-1)^g\Mat{A}}\right)\\
&=\chi\left(\trace{(X-1)\Mat{A}^g}\right)\\
&=\quotChar{\Mat{A}^g}(X),
\end{align*}
which illustrates the preservation of the $\G$-action.\\
\end{proof}
\begin{lemma}\footnotemark\label{lem:Xminus1}
If $n\geq 1$ and $l, m\geq -1$, the map $X\mapsto X-1$ induces an isomorphism from a multiplicative quotient of determinate-one matrices to an additive quotient of trace-zero matrices:
\begin{equation*}
\bigl(1_2+\begin{pmatrix}
\pIdeal{}^{n}&\pIdeal{}^{n+m}\\\pIdeal{}^{n+l}&\pIdeal{}^{n}\end{pmatrix}\bigr)
\!\Big/\!\bigl(1_2+\begin{pmatrix}
\pIdeal{}^{2n}&\pIdeal{}^{2n+m}\\\pIdeal{}^{2n+l}&\pIdeal{}^{2n}\end{pmatrix}\bigr)\!\simeq\!
\begin{pmatrix}\pIdeal{}^{n}/\pIdeal{}^{2n}
&\pIdeal{}^{n+m}/\pIdeal{}^{2n+m}\\
\pIdeal{}^{n+ l}/\pIdeal{}^{2n+l}&\pIdeal{}^{n}/\pIdeal{}^{2n}.
\end{pmatrix}
\end{equation*}
\end{lemma}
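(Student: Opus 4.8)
The plan is to show that $\phi\colon X\mapsto X-1_2$ descends to a well-defined isomorphism of the two quotients, with image exactly the trace-zero matrices. Throughout, abbreviate $\mathfrak{M}=\begin{pmatrix}\pIdeal{}^{n}&\pIdeal{}^{n+m}\\\pIdeal{}^{n+l}&\pIdeal{}^{n}\end{pmatrix}$ and $\mathfrak{M}'=\begin{pmatrix}\pIdeal{}^{2n}&\pIdeal{}^{2n+m}\\\pIdeal{}^{2n+l}&\pIdeal{}^{2n}\end{pmatrix}$, so the source is $\bigl((1_2+\mathfrak{M})\cap\grp{G}\bigr)\big/\bigl((1_2+\mathfrak{M}')\cap\grp{G}\bigr)$. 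First I would note that $\phi$ is well defined at the level of sets: if $X=1_2+Y$ with $Y\in\mathfrak{M}$ then $X-1_2=Y\in\mathfrak{M}$, and if moreover $Y\in\mathfrak{M}'$ then $X-1_2\in\mathfrak{M}'$, so reduction modulo $\mathfrak{M}'$ sends $(1_2+\mathfrak{M}')\cap\grp{G}$ to $0$ and therefore induces a map $\bar\phi$ on the source quotient.

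Next I would verify that $\bar\phi$ is a homomorphism from the multiplicative quotient to the additive one. For $X=1_2+Y$ and $X'=1_2+Y'$ one has $XX'-1_2=Y+Y'+YY'$, so it is enough to check that the cross term $YY'$ lands in $\mathfrak{M}'$: the off-diagonal entries of $YY'$ fall in $\pIdeal{}^{2n+m}$ and $\pIdeal{}^{2n+l}$, while each diagonal entry is a sum of a term in $\pIdeal{}^{2n}$ and a term in $\pIdeal{}^{2n+m+l}$, and it is precisely here that the hypotheses $n\geq1$, $l,m\geq-1$ are used to force $YY'\in\mathfrak{M}'$. This gives $\bar\phi(XX')=\bar\phi(X)+\bar\phi(X')$, and since the kernel of $\bar\phi$ consists exactly of those $X$ with $X-1_2\in\mathfrak{M}'$, i.e. of $(1_2+\mathfrak{M}')\cap\grp{G}$, the induced map is injective. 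To identify the image I would invoke the determinant condition: writing $Y=\left(\begin{smallmatrix}a&b\\c&d\end{smallmatrix}\right)$, the relation $\det{1_2+Y}=1$ gives $\trace{Y}=-\det{Y}=bc-ad\in\pIdeal{}^{2n}$, so $d\equiv-a\pmod{\pIdeal{}^{2n}}$ and the image of $\bar\phi$ lies in the trace-zero submodule of $\mathfrak{M}/\mathfrak{M}'$.

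The hard part will be surjectivity onto this trace-zero submodule, because a trace-zero representative $Z=\left(\begin{smallmatrix}s&b\\c&-s\end{smallmatrix}\right)$ usually fails to satisfy $\det{1_2+Z}=1$. My plan is to correct a single diagonal entry inside its own residue class: solving $\det{\begin{pmatrix}1+s&b\\c&1+d\end{pmatrix}}=1$ for $d$ yields $d=(bc-s)(1+s)^{-1}$, which is legitimate since $s\in\pIdeal{}^{n}$ makes $1+s$ a unit. I then must confirm that this $d$ lies in $\pIdeal{}^{n}$ and satisfies $d\equiv-s\pmod{\pIdeal{}^{2n}}$, so that $\left(\begin{smallmatrix}1+s&b\\c&1+d\end{smallmatrix}\right)$ is a genuine determinant-one element of $1_2+\mathfrak{M}$ whose image under $\bar\phi$ is the class of $Z$; both checks reduce to $bc\in\pIdeal{}^{2n}$ and $s^{2}\in\pIdeal{}^{2n}$. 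Combined with injectivity this produces the desired isomorphism. I expect the two genuinely delicate points to be the ideal bookkeeping that places $YY'$ in $\mathfrak{M}'$ and the verification that the determinant correction stays in the right class; consistent with the footnote, neither step makes any use of the prime being $2$.
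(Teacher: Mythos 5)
Your proposal is correct in substance and its computational core is the same as the paper's: you expand $XX'-1_2=Y+Y'+YY'$ and argue the cross term $YY'$ dies in $\mathfrak{M}'$, and you use the determinant-one relation to place $\trace{Y}$ in $\pIdeal{}^{2n}$ — which is precisely the paper's verification that its diagonal defect $d$ lies in $\pIdeal{}^{2n}$ via $a_1+a_4=\pElem^{n}(\pElem^{l+m}a_2a_3-a_1a_4)$. Where you genuinely go beyond the paper is surjectivity: the paper's proof of this lemma stops after exhibiting the homomorphism property and trace-zero representatives, leaving onto-ness implicit (it is effectively absorbed into the cardinality count in the proof of Theorem $2'$), whereas you close the loop inside the lemma with the explicit diagonal correction $d=(bc-s)(1+s)^{-1}$, checking $d\in\pIdeal{}^{n}$ and $d\equiv -s\pmod{\pIdeal{}^{2n}}$ via $d+s=(bc+s^{2})(1+s)^{-1}$. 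That construction is sound ($1+s$ is a unit since $s\in\pIdeal{}^{n}$, $n\geq 1$) and buys a self-contained isomorphism that does not lean on a counting argument; it is a real improvement in completeness over the paper's writeup.

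One caveat, which you share exactly with the paper: the containment of the diagonal cross terms is governed by $\pIdeal{}^{2n+l+m}\subseteq\pIdeal{}^{2n}$, i.e.\ by $l+m\geq 0$, and the stated hypotheses $l,m\geq -1$ alone do not give this (for $l=m=-1$ and $n=1$ the source set is not even closed under multiplication). So your parenthetical claim that ``$n\geq 1$, $l,m\geq-1$ \dots force $YY'\in\mathfrak{M}'$'' is no more justified than the paper's own bookkeeping, where $x_1=a_1b_1+\pElem^{l+m}a_2b_2$ is integral only when $l+m\geq 0$; the same strengthening is needed for your claims $\trace{Y}=bc-ad\in\pIdeal{}^{2n}$ and $bc\in\pIdeal{}^{2n}$ in the surjectivity step, since $bc\in\pIdeal{}^{2n+l+m}$ is all one actually has. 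In the intended application (Theorem \ref{thm:dual}, where $l=m\geq 0$) nothing is lost, but a careful version of both your argument and the paper's should add $l+m\geq 0$ to the hypotheses.
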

\begin{proof}First note that so long as the constraints of $l$, $m$, and $n$ are respected, the set of determinant one matrices of the form
\begin{equation*}
\left(1_2+\begin{pmatrix}
\pIdeal{}^{n}&\pIdeal{}^{n+m}\\\pIdeal{}^{n+l}&\pIdeal{}^{n}\end{pmatrix}\right)
\end{equation*} 
is a group.  Namely it is the intersection of $\grp{K}_n$, $\grp{B}_{n+l}$ and the stabilizer of $\left[\begin{smallmatrix}0\\1\end{smallmatrix}\right]_{n+m}$.\\
Again I calculate:
\begin{align*}
(1_2&+\pElem^n\begin{pmatrix}a_1&\pElem{}^{m}a_2\\
\pElem{}^{l}a_3&a_4
\end{pmatrix})(1_2+\pElem^n\begin{pmatrix}b_1&\pElem{}^{m}b_3\\
\pElem{}^{l}b_2&b_4
\end{pmatrix})\\
&=1_2+\pElem^n(\begin{pmatrix}a_1&\pElem{}^{m}a_2\\
\pElem{}^{l}a_3&a_4
\end{pmatrix}+\begin{pmatrix}b_1&\pElem{}^{m}b_3\\
\pElem{}^{l}b_2&b_4
\end{pmatrix})
+\pElem^{2n}\begin{pmatrix}
x_1&
\pElem{}^mx_2\\
\pElem{}^lx_3&
x_4
\end{pmatrix}\\
&=(1_2+\pElem^n\begin{pmatrix}a_1+b_1&\pElem{}^{m}(a_2+b_3)\\
\pElem{}^{l}(a_3+b_2)&d-(a_1+b_1)
\end{pmatrix})(1_2+\pElem^{2n}\begin{pmatrix}
x_1&
\pElem{}^mx_2\\
\pElem{}^lx_3&
x_4
\end{pmatrix}+r)\\
&\equiv(1_2+\pElem^n\begin{pmatrix}a_1+b_1&\pElem{}^{m}(a_2+b_3)\\
\pElem{}^{l}(a_3+b_2)&d-(a_1+b_1)
\end{pmatrix}).
\end{align*}
Where $x_i$ are the appropriate linear combinations of the $a_j\cdot b_k$, $d$ is the difference between $a_4+b_4$ and $-a_1-b_1$ and $r$ is a remainder in $\begin{pmatrix}\pIdeal{}^{2n}
&\pIdeal{}^{2n+m}\\
\pIdeal{}^{2n+l}&\pIdeal{}^{2n}.
\end{pmatrix}$.\\
I must verify that $d\in\pIdeal{}^{2n}$. To complete the proof I observe that
\begin{equation*}
(1+\pElem^{n}a_1)(1+\pElem^{n}a_4)-\pElem^{2n+m+l}a_2a_3=1,
\end{equation*}
so
\begin{equation*}
a_4+a_1=\pElem^{2n}(\pElem^{m+l}a_2a_3-a_1a_4).
\end{equation*}
An identical calculation for $b_4+b_1$ places $d$
clearly within $\pIdeal{}^{2n}$.
\end{proof}
I now can prove a slightly more general theorem than Theorem \ref{thm:dual}
\begin{namedtheorem}[$\mathbf{2^\prime}$]
If $n\geq 1$ and $-1\leq l,m\leq \ramInd{}$, then the set of characters of the quotient of determinant-one matrices
\begin{equation*}
\left(1_2+\begin{pmatrix}
\pIdeal{}^{n}&\pIdeal{}^{n+m}\\\pIdeal{}^{n+l}&\pIdeal{}^{n}\end{pmatrix}\right)
\Big/\left(1_2+\begin{pmatrix}
\pIdeal{}^{2n}&\pIdeal{}^{2n+m}\\\pIdeal{}^{2n+l}&\pIdeal{}^{2n}\end{pmatrix}\right)
\end{equation*}
as a $\grp{G}$-set is parametrized by trace-zero matrices of the form
\begin{equation*}
\begin{pmatrix}\pIdeal{}^{-2n-\ramInd{}}/\pIdeal{}^{-n-\ramInd{}}
&\pIdeal{}^{-2n-l}/\pIdeal{}^{-n-l}\\
\pIdeal{}^{-2n-m}/\pIdeal{}^{-n-m}&\pIdeal{}^{-2n-\ramInd{}}/\pIdeal{}^{-n-\ramInd{}}
\end{pmatrix}\end{equation*}
by the map
\begin{equation*}
\Mat{A}\mapsto\quotChar{\Mat{A}}.
\end{equation*}
\end{namedtheorem}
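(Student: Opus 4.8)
The plan is to reduce the statement about the multiplicative quotient to one about a finite additive group and its Pontryagin dual, and then to read that dual off the trace pairing of Lemma~\ref{lem:pair}. Write $Q$ for the multiplicative quotient of determinant-one matrices in the statement, write $V$ for the additive trace-zero quotient appearing on the right-hand side of Lemma~\ref{lem:Xminus1}, and write $W$ for the quotient of trace-zero matrices that is claimed to parametrize the characters. By Lemma~\ref{lem:Xminus1} the map $X\mapsto X-1$ is a group isomorphism $Q\simeq V$, so characters of $Q$ are the same as characters of the abelian group $V$, and I will work with $V$ throughout. By Lemma~\ref{lem:Gmap} the assignment $\Mat{A}\mapsto\quotChar{\Mat{A}}$ is a homomorphism and intertwines conjugation of matrices with the dual action on characters. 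Hence it suffices to prove that $\Mat{A}\mapsto\quotChar{\Mat{A}}$ descends to a bijection of underlying sets $W\to\widehat{V}$; the $\grp{G}$-equivariance asserted by the theorem is then automatic from Lemma~\ref{lem:Gmap}.

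First I would pin down exactly which $\Mat{A}$ give a well-defined character of $V$ and which give the trivial character; both are the entrywise computation of Lemma~\ref{lem:pair}, run at two levels. Writing $\quotChar{\Mat{A}}(X)=\chi\bigl(\BLPair{X-1}{\Mat{A}}\bigr)$ and using that the additive character $\chi$ is trivial on $\oRing{}$ but not on any larger fractional ideal, the character is trivial on the denominator subgroup $1_2+\left(\begin{smallmatrix}\pIdeal{}^{2n}&\pIdeal{}^{2n+m}\\\pIdeal{}^{2n+l}&\pIdeal{}^{2n}\end{smallmatrix}\right)$ precisely when $\trace{\Mat{B}\Mat{A}}\in\oRing{}$ for every level-$2n$ trace-zero $\Mat{B}$, which by the entry computation of Lemma~\ref{lem:pair} says exactly that $\Mat{A}$ lies in the numerator lattice of $W$; likewise $\quotChar{\Mat{A}}$ is trivial on all of $V$ precisely when the same integrality holds against every level-$n$ $\Mat{B}$, which says exactly that $\Mat{A}$ lies in the denominator lattice of $W$. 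The converse containments — that these conditions force $\Mat{A}$ into the stated lattices and no smaller ones — are the non-degeneracy clause of Lemma~\ref{lem:pair}. Together these identify the well-definedness locus with the numerator and the kernel with the denominator, producing a well-defined injection $W\hookrightarrow\widehat{V}$.

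It then remains to check surjectivity, for which I would compare cardinalities. Each of $V$ and $W$ is built from three slots of the shape $\pIdeal{}^{k}/\pIdeal{}^{k+n}$ (the diagonal contributing a single slot, because of the trace-zero condition), so $V$ and $W$ have the same finite order; since $\lvert\widehat{V}\rvert=\lvert V\rvert$ for a finite abelian group, the injection above is forced to be a bijection. If one prefers not to assume the residue field is finite, the identical entry computation run in the second variable injects $V$ into $\widehat{W}$, and the two injections together force both maps to be isomorphisms. Combining the group isomorphism $Q\simeq V$, the bijection $W\simeq\widehat{V}$, and the equivariance of Lemma~\ref{lem:Gmap} proves Theorem~$\mathbf{2^\prime}$, and specializing to $l=m$ recovers Theorem~\ref{thm:dual}.

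The main obstacle is the bookkeeping of paragraph two: getting the diagonal exponent of $W$ to come out as $-\ramInd{}$ rather than $-n$. This is the one place where residual characteristic two is felt. In $\trace{\Mat{A}\Mat{B}}$ the diagonal contributes a term $2a_1b_1$, and since $2=\pElem^{\ramInd{}}/u$ with $u\in\oRing{}^\times$ this term carries an extra valuation of $\ramInd{}$ instead of being a unit multiple of $a_1b_1$; it is exactly this shift that dualizes the diagonal slot $\pIdeal{}^{n}$ to $\pIdeal{}^{-n-\ramInd{}}$ and that makes the naive Killing-form normalization (valid for $p$ odd) degenerate here. Keeping the factor $u$ visible throughout, and using the hypothesis $l,m\leq\ramInd{}$ to ensure the diagonal slot is the deepest — the same inequality that yields normality in Theorem~\ref{thm:Nrml} — is what makes the lattices nest correctly and the dual emerge in the asserted form.
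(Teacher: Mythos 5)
Your proposal is correct and follows essentially the same route as the paper's own proof: reduce to the additive quotient via Lemma~\ref{lem:Xminus1}, get the character property and $\grp{G}$-equivariance from Lemma~\ref{lem:Gmap}, deduce injectivity from the non-degeneracy of the trace pairing in Lemma~\ref{lem:pair}, and conclude surjectivity by matching cardinalities of the two quotients through Pontryagin duality. Your only additions are refinements the paper leaves implicit --- the explicit two-level check that $\quotChar{\Mat{A}}$ is well defined on the quotient and trivial exactly on the denominator lattice, and the remark pinning the diagonal exponent $-\ramInd{}$ on $\val(2)=\ramInd{}$ --- which sharpen but do not change the argument.
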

\begin{proof}
Consider the function $\quotChar{\Mat{A}}$ factored into three parts:
\begin{equation*}
\quotChar{\Mat{A}}= X\mapsto (X-1)\mapsto \trace{(X-1)\Mat{A}}\mapsto \chi\bigl(\trace{(X-1)\Mat{A}}\bigr).
\end{equation*}
As each factor of $\quotChar{\Mat{A}}$ is a group homomorphism (Lemmas \ref{lem:pair}, \ref{lem:Gmap} and \ref{lem:Xminus1}), $\quotChar{\Mat{A}}$ is indeed a character. \\
By the linearity of the pairing $\BLPair{\Mat{B}}{\Mat{A}}$ (Lemma \ref{lem:pair}), if $\quotChar{\Mat{A}}=\quotChar{\Mat{A}^\prime}$ then \linebreak $\quotChar{\Mat{A}}/\quotChar{\Mat{A}^\prime}=\quotChar{\Mat{A}-\Mat{A}^\prime}\equiv 1$ (Lemma \ref{lem:Gmap}).  In such a case $\Mat{A}-\Mat{A}^\prime\in \begin{pmatrix}\pIdeal{}^{-n-\ramInd{}}
&\pIdeal{}^{-n-l}\\
\pIdeal{}^{-n-m}&\pIdeal{}^{-n-\ramInd{}}
\end{pmatrix}$, by the non-degeracy of the pairing $\BLPair{\Mat{B}}{\Mat{A}}$ (Lemma \ref{lem:pair}). Hence, the map is 1-to-1.\\
Since the factor $X\mapsto X-1$ induces an isomorphism to an abelian groups (Lemma \ref{lem:Xminus1}), Pontryagin duality implies that the characters of the multiplicative quotient are in one-to-one correspondence with the elements of the quotient itself. \\
The map is onto since:
\begin{equation*}
\left|\begin{pmatrix}\pIdeal{}^{n}/\pIdeal{}^{2n}
&\pIdeal{}^{n+m}/\pIdeal{}^{2n+m}\\
\pIdeal{}^{n+ l}/\pIdeal{}^{2n+l}
&\pIdeal{}^{n}/\pIdeal{}^{2n}
\end{pmatrix}\right|
=
\left|\begin{pmatrix}\pIdeal{}^{-2n-\ramInd{}}/\pIdeal{}^{-n-\ramInd{}}
&\pIdeal{}^{-2n-l}/\pIdeal{}^{-n-l}\\
\pIdeal{}^{-2n-m}/\pIdeal{}^{-n-m}
&\pIdeal{}^{-2n-\ramInd{}}/\pIdeal{}^{-n-\ramInd{}}
\end{pmatrix}
\right|.
\end{equation*}

\end{proof}
\bibliographystyle{plain}
{\small\bibliography{M-PMaps}}

\begin{thebibliography}{10}

\bibitem{C1972}
W.~Casselman.
\newblock On the representations of {$SL_2(k)$} related to binary quadratic
  forms.
\newblock {\em American Journal of Mathematics}, 94(3):810--834, Jul. 1972.

\bibitem{ADSS2011}
Paul J. Sally~Jr. Jefferey D.~Adler, Stephen~DeBacker and Loren Spice.
\newblock Supercuspidal characters of {$\mathrm{SL}_2$} over a {$p$}-adic
  field.
\newblock In Loren~Spice Robert S.~Doran, Paul J. Sally~Jr., editor, {\em
  Harmonic Analysis on Reductive, p-adic Groups}, pages 19--70. Amer. Math.
  Soc., 2011.
\newblock AMS special session, harmonic analysis and representations of
  reductive, $p$-adic groups, January 16, 2010.

\bibitem{K1978p1}
P.C. Kutzko.
\newblock On the supercuspidal representations of {$Gl_2$}.
\newblock {\em American Journal of Mathematics}, 100(1):43--60, February 1978.

\bibitem{K1978p2}
P.C. Kutzko.
\newblock On the supercuspidal representations of {$Gl_2$}, {II}.
\newblock {\em American Journal of Mathematics}, 100(4):705--716, August 1978.

\bibitem{K1972}
Phillip~Caesar Kutzko.
\newblock {\em The Characters of the Binary Modular Congruence Group}.
\newblock PhD thesis, University of Wisconsin, January 1972.

\bibitem{M1}
D.~Manderscheid.
\newblock On the supercuspidal representations of {$SL_2$} and its two-fold
  cover. {I}.
\newblock {\em Math. Ann.}, 266:287--295, 1984.

\bibitem{M2}
D.~Manderscheid.
\newblock On the supercuspidal representations of {$SL_2$} and its two-fold
  cover. {II}.
\newblock {\em Math. Ann.}, 266:297--305, 1984.

\bibitem{MP1994}
Allen Moy and Gopal Prasad.
\newblock Unrefined minimal $k$-types for $p$-adic groups.
\newblock {\em Invent. Math.}, 116(no. 1–3):393–408, 1994.

\bibitem{NW1976}
Alexander Nobs and Jurgen Wolfart.
\newblock Die irreduziblen darstellungen der gruppen {$\SL{2}{Z_p}$} ,
  insbesondere {$\SL{2}{Z_2}$} {II}.
\newblock {\em Comment. Math. Helv.}, 51:491–526, 1976.

\bibitem{S1977}
Jean-Pierre Serre.
\newblock {\em Trees}.
\newblock Springer Monographs in Mathematics. Springer-Verlag, first english
  edition, 2003, 1980.
\newblock Translation of {\textit{Arbres, Amalgames,{$\mathrm{SL}_{2}$}.
  Asterisque no. 46}}, from the French by John Stillwell.

\bibitem{Sh1966}
Joseph~A. Shalika.
\newblock {\em Representation of the Two by Two Unimodular Group Over Local
  Fields}.
\newblock PhD thesis, Johns Hopkins University, 1966.

\bibitem{K-S2011}
Terence Joseph~K\i vran Swaine.
\newblock {\em The Admissible Dual of SL(2) of the Dyadic Numbers}.
\newblock PhD thesis, The City University of New York, May 2011.

\bibitem{W1964}
Andr\'{e} Weil.
\newblock Sur certains groupes d$'$op\'{e}rateurs unitaires.
\newblock {\em Acta. Math.}, 111:143–21, 1964.

\bibitem{Y2001}
Jiu-Kang Yu.
\newblock Construction of tame supercuspidal representations.
\newblock {\em J. Amer. Math. Soc.}, 14(3):579--622, 2001.
\newblock electronic.

\end{thebibliography}
\end{document}